\newcommand{\G}[2]{\ensuremath{\mathbb{G}(#1, #2)}}
\newcommand{\HC}[4]{\ensuremath{H_{#1}C^{#2}(#4_1,...,#4_{#3})}}
 \newcommand{\C}[3]{\ensuremath{C^{#1}(#3_1,...,#3_{#2})}}
\newcommand{\PGL}[2]{\ensuremath{\mathrm{PGL}(#1, #2)}}
\newcommand{\mult}[2]{\ensuremath{\mathrm{mult_{\mathit{p}}}(#1\cap #2)}}
\newcommand{\osc}[2]{\ensuremath{\mathit{P}(#1, #2)}}
\newtheorem{theorem}{Theorem}
\newtheorem{lemma}[theorem]{Lemma}
\newtheorem{proposition}[theorem]{Proposition}
\newtheorem{example}[theorem]{Example}
\newtheorem{corollary}[theorem]{Corollary}
\newtheorem{remark}[theorem]{Remark}
\begin{document}
\title[Weierstrass weight of the hyperosculating points]{Weierstrass weight of the hyperosculating points of generalized Fermat curves}

\author[R. A. Hidalgo]{Rub\'en A. Hidalgo}
\address{Departamento de Matem\'atica y Estad\'istica, Universidad de la Frontera, Temuco, Chile}
\email{ruben.hidalgo@ufrontera.cl}

\author[M. Leyton-\'Alvarez]{Maximiliano Leyton-\'Alvarez}
\address{Instituto de Matem\'atica y F\'isica, Universidad de Talca, Talca, Chile}

\email{leyton@inst-mat.utalca.cl}

\thanks{The first author was partially supported by the projects
 Fondecyt 1150003 and Anillo ACT1415 PIA-CONICYT.
The second author was partially supported by project Fondecyt 1170743}
\maketitle


\begin{abstract}
Let $(S,H)$ be a generalized Fermat pair of the type $(k,n)$. If $F\subset S$ 
is the set of  fixed points of the non-trivial elements of the group $H$, then $F$ is exactly 
the set of hyperosculating points of the standard embedding $S\hookrightarrow {\mathbb{P}}^{n}$. We provide an  optimal lower bound (this being sharp in
a dense open set  of  the moduli space of the generalized Fermat curves) for the Weierstrass weight of these points.
\end{abstract}

\section{Introduction}
The geometry of hyperbolic closed Riemann surfaces can be described via different objects: complex 
projective algebraic curves,  Fuchsian and Schottky groups, Jacobian varieties, etc.
Also, important to each  Riemann surface is its group of conformal automorphisms as those surfaces with a
 non-trivial group of conformal automorphisms determine the branch locus in the moduli space.
In genus at least four the branch locus coincides with its topological singular locus \cite{Rauch}.
Correspondence between these different descriptions are, in the general situation, only existential; most of the known classic examples for which this is explicitly done are rigid, that is, they have no moduli (they correspond to those Riemann surfaces with a large group of conformal automorphisms).  Some correspondences are also known for the case of cyclic $n$-gonal curves (including the case of hyperelliptic Riemann surfaces).  Generating families of Riemann surfaces  where these different descriptions are concretely known is not an easy problem. 
Nevertheless, having them may help to understand the geometry of the moduli spaces of  curves.  In this paper we study an interesting family of non-hyperelliptic Riemann surfaces,  called generalized Fermat curves, where these different descriptions have been studied  and a good understanding of them have been achieved; see for instance \cite{CHQ15, FGHL13,GHL09,HKLP16}.

A closed Riemann surface $S$ is a called a generalized Fermat curve of the type $(k,n)$, where $k,n \geq 2$ are integers, if it admits a group $H \cong {\mathbb Z}_{k}^{n}$ of conformal automorphisms so that the quotient orbifold $S/H$ has genus zero and exactly $(n+1)$ conical points, each one necessarily of order $k$. In this case, the group $H$ (respectively, the pair $(S,H)$) is called a generalized Fermat group (respectively, a generalized Fermat pair) of type $(k,n)$. As a consequence of the Riemann-Hurwitz formula, it can be seen that $S$ has genus
$$g_{k,n}:=\frac{k^{n-1}((n-1)(k-1)-2)+2}{2}.$$

In \cite{FGHL13} it was proved that, for $n=3$ and $k \geq 3$,  a generalized Fermat curve of the type $(k,n)$ has a unique generalized Fermat group of type $(k,n)$ and later, in \cite{HKLP16}, as long $(k-1)(n-1)>2$ (equivalently, $g_{k,n}>1$), this uniqueness property was proved to be true in general. 
 This fact asserts that 
the moduli space of generalized Fermat curves of the type $(k,n)$ can be identified with 
the moduli space of  orbifolds of genus zero with $(n+1)$ cone points, each one of order $k$. In particular, 
generalized Fermat curves of type $(k,n)$ provide a $(n-2)$ complex dimensional family inside
the moduli space of surfaces of genus $g_{k,n}$ (those of type  $(k,2)$ are exactly the classic 
Fermat curves of degree $k$). Also, this facilitates the computation of  the extra automorphisms of
a generalized Fermat curve (see the proof of Corollary 9 of \cite{GHL09}).

Let $(S,H)$ be a generalized Fermat pair of type $(k,n)$, where $(k-1)(n-1)>2$. A representation of $S$ as an algebraic curve and an uniformizing Fuchsian group was provided in \cite{GHL09} and, for $k$ a prime integer, an isogenous decomposition of its Jacobian variety was obtained in \cite{CHQ15}. In fact, 
if the orbifold $S/H$ is uniformized by the Fuchsian group $\Gamma \cong\langle x_1,...,x_{n+1}: x_1^k=\cdots x_{n+1}^k=x_1\cdots x_{n+1}=1\rangle$, then 
its derived subgroup $\Gamma'$ is torsion free, it uniformizes $S$ and 
$H$ corresponds to $\Gamma/\Gamma'$. This, in particular, asserts that $S$ is a maximal Abelian branched covering space of the orbifold $S/H$. 
Let $\rho:S \to \widehat{\mathbb C}$ be a  regular branched cover whose deck group is $H$. Up to post-composition 
by a suitable M\"obius transformation, we may assume the branch values of $\rho$ to be given by 
$\infty$, $0$, $1$, $\lambda_{1},\ldots, \lambda_{n-2}$. Then, $(S,H)$ is isomorphic to the pair
$(\C{k}{n}{\lambda},H)$  (by abuse of notation we use $H$ in both contexts), where 
\begin{equation}\label{star}
 \C{k}{n-2}{\lambda}:=\left \{ \begin{array}{rcc}
              x_0^k+x_1^k+x_2^k&=&0\\
              \lambda_1x_0^k+x_1^k+x_3^k&=&0\\
              \vdots \hspace{1cm}&\vdots &\vdots\\
              \lambda_{n-2}x_0^k+x_1^k+x_n^k&=&0\\
             \end{array}\right  \}\subset {\mathbb{P}}^n,  
\end{equation}
and $H$ is generated by the restrictions of the linear transformations
\begin{center}
 $\varphi_j([x_0:\cdots:x_j:\cdots:x_n]):=[x_0:\cdots:w_kx_j:\cdots:x_n]$, where $w_k:={\mathrm{e}}^{\frac{2\pi i}{k}}$.
\end{center}

The set of fixed points of  $\varphi_j$ in $\C{k}{n-2}{\lambda}$ is ${\mathrm{Fix}}(\varphi_j):=F_j \cap \C{k}{n-2}{\lambda}$,
where $F_j$ is the hyperplane $\{x_j:=0\}\subset {\mathbb{P}}^n$. 
Set $F:=\cup_{j=0}^n{\mathrm{Fix}}(\varphi_j)={\mathrm{Fix}}\;H$.
In this algebraic model, $\rho([x_0,...,x_n])=-(x_{1}/x_{0})^{k}$. The above produces an analytic embedding 
$S \hookrightarrow  \C{k}{n-2}{\lambda}  \subset {\mathbb{P}}^n$, called the standard embedding of the 
generalized Fermat curve $S$.
In \cite{HKLP16} it  was observed that the set hyperosculating points 
of such standard embeddings is $F$; in particular, these are Weierstrass points of $S$. The Weierstrass points are important in the geometry of Riemann surfaces,  and in general the determination of all these  points  together their respective weights remains a difficult problem, including for classically known curves. In the case of the classic Fermat curves (that is, $n=2$), in 1950 Hasse \cite{Has50} computed the Weierstrass weight of the 
hyperosculating points.    Leopoldt observed that for $k\geq 5$ the points $[1:\alpha: \sqrt[k]{2}\beta],
[1:\sqrt[k]{2}\beta:\alpha]$ and  $ [\sqrt[k]{2}:\beta: \alpha\beta]$, were $\alpha$ (resp. $\beta$) is a $k$-th root of $1$ (resp. $-1$) are $3k^2$ 
new Weierstrass points of the Fermat curve (for more information see Rohrlich's article \cite{Roh82}).  
 In 1999 Watanabe \cite{Wat99} showed that in the case $k=6$ additional Weierstrass points exist.
The Weierstrass weight of points fixed by involutions in the case $k \in \{9,10\}$ was obtained by 
Towse in \cite{Tow00}.

In this work we study the  Weierstrass weight of the hyperosculting points of the standard embedding of generalized Fermat curves of type $(k,n)$ when $(k-1)(n-1)>2$. We provide an  optimal lower bound 
(this being sharp in a dense open set  of  the moduli space of the generalized Fermat curves) for the Weierstrass weight of these points.

\section{Preliminaries}\label{sec:hosc}

\subsection{Moduli of generalized Fermat curves}\label{sec:pre-mod-esp}
Let $\mathcal{F}(k,n)$ be the locus, in the moduli space ${\mathcal M}_{g_{k,n}}$ of curves of genus $g_{k,n}$, formed by all the (classes) of generalized Fermat curves of  type $(k,n)$.  The space $\mathcal{F}(k,n)$ is isomorphic to the moduli space $\mathcal{M}_{0,n+1}$ of the unordered $(n+1)$ punctured sphere (see section $4.2$ of \cite{GHL09}).  
Let us consider the affine variety (in fact, a domain in ${\mathbb C}^{n-2}$)
$$\mathcal{P}_n:=\{(\lambda_1,...,\lambda_{n-2})\in
\mbox{(${\mathbb{C}}-\{0,1\}$)}^{n-2}\mid \lambda_i\neq \lambda_j\} \subset {\mathbb C}^{n-2}.$$

To each $(\lambda_1,...,\lambda_{n-2})\in \mathcal{P}_n$ we associate the $(n+1)$-tuple $(\gamma_1,\gamma_2,\gamma_3,\gamma_4,...,\gamma_{n+1})=(\infty, 0,1,\lambda_1,...,\lambda_{n-2})$. Given an element $\sigma \in \mathfrak{S}_{n+1}$, 
 the permutation group of $n+1$ elements, we can form the $(n+1)$-tuple  
  $(\gamma_{\sigma^{-1}(1)},\gamma_{\sigma^{-1}(2)},...,\gamma_{\sigma^{-1}(n+1)})$. 
Let  $T_{\sigma}\in {\rm PSL}(2,\mathbb{C})$  be the unique M\"obius Transformation satisfying $T_{\sigma}(\gamma_{\sigma^{-1}(1)})=\infty$, 
$T_{\sigma}(\gamma_{\sigma^{-1}(2)})=0$, and $T_{\sigma}(\gamma_{\sigma^{-1}(3)})=1$, and set
$$\sigma \cdot (\lambda_{1},\ldots, \lambda_{n-2})=(T_{\sigma} (\gamma_{\sigma^{-1}(4)}),...,T_{\sigma} (\gamma_{\sigma^{-1}(n+1)}) \in {\mathcal P}_{n}.$$

The above provides an action of ${\mathfrak S}_{n+1}$ as a group of holomorphic automorphisms on ${\mathcal P}_{n}$
$$
\mathfrak{S}_{n+1}\times \mathcal{P}_n\rightarrow  \mathcal{P}_n; (\lambda_{1},\ldots, \lambda_{n-2})\mapsto 
\sigma \cdot (\lambda_{1},\ldots, \lambda_{n-2}).
$$

The above action is faithful for $n \geq 4$. For $n=3$ there is a subgroup isomorphic to ${\mathbb Z}_{2}^{2}$ acting trivially on ${\mathcal P}_{3}$.
In \cite{GHL09} it was observed that two generalized Fermat curves $\C{k}{n-2}{\lambda}$ and $\C{k}{n-2}{\mu}$ are isomorphic if and only if $(\lambda_{1},\ldots,\lambda_{n-2})$ and $(\mu_{1},\ldots,\mu_{n-2})$ are in the same ${\mathfrak S}_{n+1}$-orbit. This, in particular, 
permits us to realize the moduli space $\mathcal{F}(k,n)$ as a geometric quotient 
$\mathcal{P}_n/\mathfrak{S}_{n+1}$, that is, as a  complex (affine) variety and that the canonical projection
map $\Pi:  \mathcal{P}_n\to  \mathcal{P}_n /{\mathfrak S}_{n+1}$ is an open morphism.

\subsection{Hyperosculating points}
Next, we will briefly review the general theory of the Pl\"ucker 
formulas in the case of smooth curves. The purpose of this is not to review extensively the theory,
but to present a self contained overview of the parts of the theory relevant to us. 
All the results presented in this section can be found in \cite{GrHa94}.
Let $C \subset {\mathbb{P}}^n$ be a projective smooth curve. For a $l$-plane
$P\subset {\mathbb{P}}^{n}$, $1\leq l\leq n-1$, the  multiplicity of $P$ in $p$ is
\begin{center}
$\mult{P}{C}:=\mbox{Order of contact of  $P$ and $C$ in $p$}$. 
\end{center}

It is known that, for each $p \in C$, there exists a unique $l$-plane, called the {\it osculating $l$-plane} and denoted by $\osc{l}{p}$, such that 
$\mult{\osc{l}{p}}{C}\geq l+1$, and that there exists at most a finite number of points $p\in C$ such that
 $\mult{\osc{l}{p}}{C}> l+1$. We say that $p\in C$ is  a hyperosculating point if 
$$\mult{\osc{n-1}{p}}{C}>n.$$

\begin{remark}
If $C$ is a non hyperelliptic curve of genus $g \geq 3$, then the hyperosculating points of the canonical embedding 
of $C \hookrightarrow {\mathbb P}^{g-1}$ are exactly its  Weierstrass points.
\end{remark}

As the $l$-planes of ${\mathbb{P}}^n$ are in bijective correspondence with the 
dimension $(l+1)$ vector subspaces of ${\mathbb{C}}^{n+1}$, we can define the functions
$$f_l:C\rightarrow \G{l+1}{n+1}; p\mapsto \osc{l}{p},$$
where $\G{l+1}{n+1}$ is the corresponding Grasmannian manifold.

Let $f_0:C\rightarrow {\mathbb{P}}^n$  be the natural embedding defined by the inclusion  $C\subset {\mathbb{P}}^n$,
and let us consider a  local chart $z:U\subset C \rightarrow W \subset {\mathbb{C}} $, $z(p)=0$, 
around the point $p\in C$. Then there exists a neighborhood $W'\subset W$ of $0$,
and a holomorphic vector function 
$$v:W'\rightarrow {\mathbb{C}}^{n+1}\backslash\{0\}:\;z 
\mapsto v(z):=(v_0(z),v_1(z),...,v_n(z)),$$
such that  
$$
f_0(z)=[v_0(z):v_1(z):\cdots:v_n(z)],  \quad \mbox{for all  $z\in W'$.}
$$

Let us consider the holomorphic vector function
$$w:W'\rightarrow \wedge^{l+1}{\mathbb{C}}^{n+1}:\; z\mapsto 
w(z):=v(z)\wedge v'(z)\wedge \cdots \wedge v^{(l)}(z).$$

There exists an integer  $m\geq 0$ such that $w(z)/z^{m}$ 
is a holomorphic vector function which does not vanish in a neighborhood $W''$ of $z=0$. 
By abuse of notation, we may say that $[w(z)]\in {\mathbb{P}}(\wedge^{l+1}{\mathbb{C}}^{n+1})$ for all  $z\in W''$.

Using the Pl\"ucker coordinates, it is possible to see $\G{l+1}{n+1}$ as a subvariety of
${\mathbb{P}}(\wedge^{l+1}{\mathbb{C}}^{n+1})$ and that
$$
f_l(z)=[v(z)\wedge v^{'}(z)\wedge \cdots \wedge v^{(l)}(z) ], \quad \mbox{ for all $z\in W''$}.
$$

In particular, the  maps $f_l$ are holomorphic and independent of the parametrization $v(z)$ chosen.
The curves  $C_l:=f_l(C)$, $0\leq l\leq n-1$  are called the  associated curves of $C$. 
Let us define the following integers:

\begin{itemize}
\item  $b_l(p)$;  the ramification index of $f_l:C\rightarrow C_l$ at the point  $p\in C$.
\item  $b_l=\sum_{p\in C}b_l(p)$; the total ramification index of $f_l:C\rightarrow C_l$.
\item  $d_l$; the number  of osculating $l$-planes of $C$ which intersects a generic $(n-l-1)$-plane of ${\mathbb{P}}^{n}$. Observe that $d_0$ is simply the degree of the curve.
\end{itemize}
   
The following proposition establishes a relationship between the hyperosculating points of $C$ and the ramification indexes of the maps $f_l:C\rightarrow C_l$.
 
\begin{proposition}
The point $p\in C\subset {\mathbb{P}}^n$ is a hyperosculating point if and only if $\sum_{l=1}^{n-1}b_l(p)\geq 1$. 
\end{proposition}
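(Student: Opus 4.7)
The plan is to translate both sides of the equivalence into the language of the \emph{Hermite invariants} of $C$ at $p$ and then match them via a telescoping computation. I would fix a local holomorphic lift $v(z)\colon W'\to\mathbb{C}^{n+1}\setminus\{0\}$ with $z(p)=0$, and define integers $0=a_0<a_1<\cdots<a_n$ greedily so that $v^{(a_0)}(0),\dots,v^{(a_n)}(0)$ is a basis of $\mathbb{C}^{n+1}$. Then $\osc{l}{p}=\operatorname{span}(v^{(a_0)}(0),\dots,v^{(a_l)}(0))$, and a direct Taylor-expansion argument gives $\mult{\osc{l}{p}}{C}=a_{l+1}$. Writing $\alpha_i:=a_i-i\ge 0$, the hyperosculating condition $a_n>n$ becomes $\alpha_n\ge 1$; the smoothness of $C$ forces $a_1=1$, hence $\alpha_1=0$, and by monotonicity $0=\alpha_0=\alpha_1\le\alpha_2\le\cdots\le\alpha_n$.

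The core of the proof is the identity
\[
b_l(p)=\alpha_{l+1}(p)-\alpha_l(p),\qquad 0\le l\le n-1.
\]
I would establish it by changing basis of $\mathbb{C}^{n+1}$ so that $v^{(a_i)}(0)=e_i$, which forces the $i$-th coordinate of $v(z)$ to have the normal form $g_i(z)=z^{a_i}/a_i!+O(z^{a_i+1})$. The Pl\"ucker coordinates of $w_l(z)=v(z)\wedge v'(z)\wedge\cdots\wedge v^{(l)}(z)$ are then the $(l+1)\times(l+1)$ minors $\det\bigl(g_{i_s}^{(j)}(z)\bigr)_{j,s}$. A Vandermonde-type calculation shows that the $(0,1,\dots,l)$-minor has leading order exactly $m_l:=\sum_{i=0}^{l}\alpha_i$ with nonvanishing leading coefficient along $e_0\wedge\cdots\wedge e_l$, while every other Pl\"ucker coordinate has leading order at least $m_l$; the next-smallest among them is the $(0,1,\dots,l-1,l+1)$-minor, of leading order $m_l+(a_{l+1}-a_l)$ and lying in the transverse direction $e_0\wedge\cdots\wedge e_{l-1}\wedge e_{l+1}$.

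Setting $\tilde w_l(z):=w_l(z)/z^{m_l}$, one concludes that $\tilde w_l(0)$ is a nonzero multiple of $e_0\wedge\cdots\wedge e_l$ and that the smallest $k\ge 1$ for which $\tilde w_l^{(k)}(0)\notin\mathbb{C}\,\tilde w_l(0)$ equals $a_{l+1}-a_l$. Hence $f_l(z)=[\tilde w_l(z)]\in\G{l+1}{n+1}$ has local degree $a_{l+1}-a_l$ at $p$, so $b_l(p)=(a_{l+1}-a_l)-1=\alpha_{l+1}-\alpha_l$, as claimed. Telescoping,
\[
\sum_{l=1}^{n-1}b_l(p)=\sum_{l=1}^{n-1}\bigl(\alpha_{l+1}-\alpha_l\bigr)=\alpha_n-\alpha_1=\alpha_n,
\]
which is $\ge 1$ precisely when $p$ is hyperosculating.

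The main technical hurdle is the leading-order analysis of the Pl\"ucker coordinates: one must verify both the nondegeneracy of the Vandermonde-type determinant governing the $(0,1,\dots,l)$-minor and that the higher-order perturbations in the $g_i$ cannot produce cancellations that alter the leading behavior of the transverse $(0,1,\dots,l-1,l+1)$-minor. Both reduce to the distinctness of the $a_i$ and the normal-form structure of $v(z)$ in the chosen basis, so the computation is routine but bookkeeping-intensive.
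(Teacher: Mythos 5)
Your proof is correct and is essentially the route the paper itself relies on: the paper states this proposition without proof (all results of that section are quoted from Griffiths--Harris), and it is immediate from the ramification-index formula $b_l(p)=\alpha_{l+1}$ of Proposition \ref{pr:ind-ram}, since your $\alpha_i=a_i-i$ are the partial sums of the paper's $\alpha_i$, so your identity $b_l(p)=\alpha_{l+1}-\alpha_l$ coincides with the paper's, and the telescoping together with $\alpha_1=0$ (smoothness) and $\mathrm{mult}_p(P(n-1,p)\cap C)=a_n$ is exactly the intended deduction. The only difference is that you actually prove that identity by the leading-order analysis of the Pl\"ucker coordinates of $v\wedge v'\wedge\cdots\wedge v^{(l)}$, which is the standard Griffiths--Harris computation and is carried out correctly (the nondegeneracy of the generalized Vandermonde coefficients and the absence of cancellation follow, as you say, from the distinctness of the $a_i$).
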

 
In the case of generalized Fermat curves, in \cite{HKLP16} (See Theorem \ref{th:hyperosc}) the ramification indexes were explicitly computed, which allows us to determine the hyperosculating points.  The following theorem will be useful for this purpose.

\begin{theorem}[\bf Pl\"ucker Formulas]
\label{th:plucker}
If $C\subset {\mathbb{P}}^n$ is  a curve of genus  $g$, then 
\begin{center}
$d_{l+1}-2d_l+d_{l-1}=2g-2-b_l$, for all $1\leq l\leq n-1$,
\end{center}
where $d_{-1}=d_n=0$.
\end{theorem}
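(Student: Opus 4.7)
The plan is to reinterpret the integers $d_l$ and $b_l$ as degrees of line bundles on $C$ assembled from the osculating flag, and then derive the Pl\"ucker identity from a single ``derivative'' exact sequence of line bundles. First I would observe that under the Pl\"ucker embedding $\G{l+1}{n+1} \hookrightarrow {\mathbb{P}}(\wedge^{l+1}{\mathbb{C}}^{n+1})$, the Schubert cycle of $(l+1)$-dimensional subspaces meeting a fixed generic $(n-l-1)$-plane of ${\mathbb{P}}^n$ is a hyperplane section, so that $d_l$ equals $\deg f_l^{*}\mathcal{O}_{{\mathbb{G}}}(1)$. Writing $B_l := f_l^{*}\mathcal{S}$ for the pullback of the universal rank-$(l+1)$ subbundle on the Grassmannian, the isomorphism $\mathcal{O}_{{\mathbb{G}}}(1)\simeq \det(\mathcal{S})^{\vee}$ identifies $d_l$ with $-\deg(\det B_l)$.

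Next I would organize the whole osculating sequence as a chain $0 \subset B_0 \subset B_1 \subset \cdots \subset B_{n-1} \subset \mathcal{O}_C^{\oplus(n+1)}$ of holomorphic subbundles of the trivial rank-$(n+1)$ bundle on $C$, and introduce the successive quotient line bundles $L_l := B_l/B_{l-1}$. The short exact sequences $0 \to B_{l-1} \to B_l \to L_l \to 0$ then give $\deg L_l = \deg(\det B_l) - \deg(\det B_{l-1}) = d_{l-1} - d_l$, so the target identity $d_{l+1} - 2d_l + d_{l-1} = 2g - 2 - b_l$ is equivalent to
\[
\deg L_{l+1} - \deg L_l = 2-2g + b_l,
\]
with the boundary conventions $d_{-1}=d_n=0$ matching $B_{-1}=0$ and $B_n = \mathcal{O}_C^{\oplus(n+1)}$.

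The decisive step is to produce a canonical ${\mathcal O}_C$-linear map $\psi_l \colon T_C \to L_l^{\vee} \otimes L_{l+1}$ whose total vanishing order on $C$ is exactly $b_l$. The differential $df_l$ at a point $p$ lies in $T_{f_l(p)}\G{l+1}{n+1} = \mathrm{Hom}(B_l|_p,\, {\mathbb{C}}^{n+1}/B_l|_p)$, and the computation $\tfrac{d}{dz}\bigl(v \wedge v' \wedge \cdots \wedge v^{(l)}\bigr) = v \wedge v' \wedge \cdots \wedge v^{(l-1)} \wedge v^{(l+1)}$ shows that $df_l$ annihilates $B_{l-1}$ and has image in $B_{l+1}/B_l$, so it factors through $\mathrm{Hom}(L_l, L_{l+1})$, yielding $\psi_l$. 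By the very definition of $b_l(p)$ the zero divisor of $\psi_l$ has total degree $b_l$, whence $\deg(L_l^{\vee}\otimes L_{l+1}) = \deg T_C + b_l = 2 - 2g + b_l$, which is exactly the equivalent form derived above. The main obstacle is the global construction and smoothness of the osculating bundles $B_l$ across the hyperosculating points: the naive local frame $v \wedge v' \wedge \cdots \wedge v^{(l)}$ vanishes there to some order $m\geq 1$, and one must divide by $z^{m}$ (as already done for $w(z)$ in the excerpt) and verify that the resulting subbundle extends holomorphically across $p$. Once this extension---really the only non-formal ingredient---is in place, the Pl\"ucker formula reduces to the Chern-class computation sketched above.
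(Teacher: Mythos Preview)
The paper does not give its own proof of this statement: Theorem~\ref{th:plucker} is quoted without proof in the preliminaries, with the blanket attribution ``All the results presented in this section can be found in \cite{GrHa94}.'' So there is no in-paper argument to compare yours against.

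That said, your outline is essentially the standard proof one finds in Griffiths--Harris: reinterpret $d_l$ as the degree of $f_l^{*}\mathcal{O}_{\mathbb{G}}(1)=\det B_l^{\vee}$, set $L_l=B_l/B_{l-1}$, and use the second fundamental form $\psi_l\colon T_C\to \mathrm{Hom}(L_l,L_{l+1})$ to read off $\deg L_{l+1}-\deg L_l=2-2g+b_l$. The identification of the vanishing order of $\psi_l$ at $p$ with the ramification index $b_l(p)$ of $f_l$ is exactly the content of the normal-form computation (Proposition~\ref{pr:ind-ram} in the paper), and your remark about extending the osculating subbundles $B_l$ holomorphically across hyperosculating points by dividing out $z^m$ is precisely the point already flagged in the paper's discussion of $w(z)/z^m$. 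So your proposal is correct and, had the paper included a proof, it would in all likelihood have been this one.
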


\subsection{A computational method}
We proceed to describe a method for computing $b_{l}(p)$.
Keeping the notations as above, let $z$ be a local chart around the point $p\in C$ and, in local charts,
$$f_0(z)=[v_0(z):v_1(z):\cdots:v_n(z)].$$

Making linear changes of coordinates, it is possible to prove that there exists $\varphi\in {\mathrm{Aut}}({\mathbb{P}}^n)\cong \PGL{n+1}{{\mathbb{C}}}$ 
such that
$$
\varphi(f_0(z))=[1: z^{1+\alpha_1}+\cdots: z^{2+\alpha_1+\alpha_2}+\cdots:\cdots :z^{n+\alpha_1+\cdots+\alpha_n}+\cdots].
$$

This is called the  normal  form of $f_0$ in $p$. By abuse of notation, we will identify  $\varphi(f_0(z))$ with $f_0(z)$. 
It is possible to verify that the integers $\alpha_j$, $1\leq j\leq n$, only depend on $f_0$ and the point $p\in C$, and neither on the chosen local chart $z$, nor 
the vector function $v(z)$,  nor the automorphism $\varphi$.

\begin{proposition}\label{pr:ind-ram} 
In the above, 
$$
b_l(p):=\alpha_{l+1}, \quad 0\leq l\leq n-1.
$$
\end{proposition}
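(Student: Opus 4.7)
The plan is to read off the ramification of $f_l$ at $p$ from the Pl\"ucker coordinates of the lift
\[
w_l(z) \;:=\; v(z)\wedge v'(z)\wedge\cdots\wedge v^{(l)}(z)\;\in\;\wedge^{l+1}\mathbb{C}^{n+1}.
\]
In the normal form one has $v_0\equiv 1$ and $v_j(z)=z^{e_j}+(\text{higher order})$ for $1\le j\le n$, where $e_j:=j+\alpha_1+\cdots+\alpha_j$; setting $e_0:=0$ gives $0=e_0<e_1<\cdots<e_n$. For a multi-index $0\le i_0<\cdots<i_l\le n$, the corresponding Pl\"ucker coordinate of $w_l(z)$ is the $(l+1)\times(l+1)$ minor $p_{i_0,\ldots,i_l}(z):=\det\bigl(v_{i_j}^{(k)}(z)\bigr)_{0\le k,j\le l}$.

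The main computational step is to show that
\[
\operatorname{ord}_0 p_{i_0,\ldots,i_l}(z)\;=\;\sum_{j=0}^{l}e_{i_j}\;-\;\binom{l+1}{2},
\]
with nonzero leading coefficient proportional to the Vandermonde $\prod_{0\le j<j'\le l}(e_{i_{j'}}-e_{i_j})$. To establish this I would expand the minor by Leibniz: the $(k,j)$-entry has leading term $e_{i_j}^{\underline k}\,z^{e_{i_j}-k}$, with the falling factorial $e_{i_j}^{\underline k}$ vanishing precisely when $k>e_{i_j}$; collecting contributions of minimal $z$-order yields the determinant $\det\bigl(e_{i_j}^{\underline k}\bigr)_{k,j}$, which by row reduction equals the classical Vandermonde in the distinct values $e_{i_0},\ldots,e_{i_l}$, hence nonzero. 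I expect this to be the main obstacle, as one needs to verify carefully that higher-order corrections in each $v_{i_j}$ contribute only at strictly higher powers of $z$, so that no cancellation spoils the leading coefficient.

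Since $\sum_{j=0}^{l}e_{i_j}$ attains its unique minimum at $I=(0,1,\ldots,l)$, the normalized lift $\widetilde w_l(z):=w_l(z)/z^{m}$ with $m:=\sum_{j=0}^{l}e_j-\binom{l+1}{2}$ is holomorphic and nonzero at $z=0$, and $f_l(0)$ lies in the Pl\"ucker affine chart $\{p_{0,1,\ldots,l}\ne 0\}$. In the local coordinates $\phi_I:=p_I/p_{0,1,\ldots,l}$ ($I\ne(0,1,\ldots,l)$) of this chart, each $\phi_I\circ f_l$ vanishes at $p$ to order $\sum_{j=0}^{l}e_{i_j}-\sum_{j=0}^{l}e_j$, so the ramification index $e_l(p)$ of $f_l:C\to C_l$ at $p$ is the minimum of these orders over $I\ne(0,1,\ldots,l)$. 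This minimum is attained by $I=(0,1,\ldots,l-1,l+1)$, the cheapest deviation from $(0,1,\ldots,l)$, yielding $e_l(p)=e_{l+1}-e_l=1+\alpha_{l+1}$. Since $b_l(p)=e_l(p)-1$ in the Pl\"ucker-formula convention of Theorem~\ref{th:plucker} (so that $b_l(p)=0$ at a non-hyperosculating point), we conclude $b_l(p)=\alpha_{l+1}$, as claimed.
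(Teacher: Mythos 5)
Your argument is correct: the order computation for the Plücker minors via the falling-factorial (Vandermonde) leading determinant, the identification of $(0,\ldots,l-1,l+1)$ as the cheapest competing multi-index, and the shift by one to match the Plücker-formula convention for $b_l(p)$ all check out. The paper gives no proof of this proposition---it is quoted from \cite{GrHa94}---and your computation (normal form plus wedge of derivatives, reading orders off the Plücker coordinates) is essentially the standard argument found there, so there is nothing to add.
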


In particular, as $C$ is a smooth curve, $\alpha_1=0$.

\begin{remark}
\label{re:gap-W}
If $C$ is a non-hyperelliptic curve of genus  $g \geq 3$ and $C\hookrightarrow  {\mathbb{P}}^{g-1}$ 
is a canonical embedding, then 
$$
a_i=i+\sum_{j=1}^{i-1}\alpha_j, \quad 1\leq i\leq g,
$$
are the gap values of $p$. In other words,  $a_{1},\ldots,a_{g}$ are the only $g$ integers where  {\bf  there does not exist}
 a meromorphic function of  $C$ with a pole of order $a_i$ at the point $p$ and holomorphic on $C-\{p\}$.
\end{remark}

\subsection{The natural regular branched coverings of the generalized Fermat curves}
\label{ss:natural-branched-covering}

Let us start with the following general fact (which will be used later) regarding the generalized Fermat curves. 

\begin{remark}\label{re:rest}
Let us consider a generalized Fermat curve of type $(k,n)$ with its respective standard embedding 
$\xymatrix{\C{k}{n-2}{\lambda} \ar@^{(->}[r] & {\mathbb{P}}^{n} }$, 
and a rational map (denoted by the dotted line)  $\psi: {\mathbb{P}}^{n} \dashrightarrow {\mathbb{P}}^{m}$.
 If there exists 
 $0 \leq i<j\leq n$  such that the linear projective space $L_{(i,j)}:=\{[x_0:\cdots:x_n]|x_i=x_j=0\}$ contains the
locus of indeterminacy of the rational map, then  (as the intersection $\C{k}{n-2}{\lambda}\cap L_{(i,j)}$ is the empty set)
 the restriction of $\psi$  to $\C{k}{n-2}{\lambda}$, $$\xymatrix{ \C{k}{n-2}{\lambda} \ar@^{(->}[r] \ar@/^1pc/[rr]&{\mathbb{P}}^{n}\ar@{-->}[r]&{\mathbb{P}}^m},$$
is a well defined morphism.  
\end{remark}

Let us consider the rational  map  
$$\pi: {\mathbb{P}}^{n} \dashrightarrow {\mathbb{P}}^{n-1}: [x_0:\cdots:x_n] \mapsto [x_0:\cdots:x_{n-1}].$$

As the locus of indeterminacy of $\pi$ is the point
$[0:\cdots:0:1]\in L_{(0,1)} \subset {\mathbb{P}}^{n}$, then (by Remark \ref{re:rest}) the restriction of $\pi$ to $\C{k}{n-2}{\lambda}$
is a well defined morphism.  Additionally, $\pi(\C{k}{n-2}{\lambda}))=\C{k}{n-3}{\lambda}$ (when $n=3$, 
this image curve is the classic Fermat curve $C^k$).
By abuse of notation we  denote by $\pi: \C{k}{n-2}{\lambda} \to \C{k}{n-3}{\lambda}$ the restriction of
$\pi$ to $\C{k}{n-2}{\lambda}$. Note that the restricted map  $\pi$,  is a regular branched covering
whose deck covering group is the cyclic group generated the automorphism 
$$\varphi_n([x_0:\cdots:x_n]):=[x_0:\cdots :\mathrm{e}^{\frac{2\pi i}{k}}x_n],$$
so its  branch values  are the images of the $k^{n-1}$ fixed points of $\varphi_{n}$.
This map $\pi$ is compatible with the embeddings of the Fermat curves into projective spaces. 
The quotient group $H/\langle \varphi_{n}\rangle \cong {\mathbb Z}_{k}^{n-1}$ is the generalized Fermat
group of type $(k,n-1)$ of  $\C{k}{n-3}{\lambda}$. The $k^{n-1}$ fixed points of each 
$\varphi_{j}$ ($j=0,\ldots,n$) are permuted under the action of $\varphi_{n}$ (fixing none of them), and 
this set is projected under $\pi$ to the set of $k^{n-2}$ fixed points of the quotient class of $\varphi_{j}$. 
This map will be used to obtain an inductive approach to our problem.

\section{Hyperosculating points of generalized Fermat curves}
In this section we restrict our attention to generalized Fermat curves.
Keeping  the notations fixed at the beginning of the Section \ref{sec:pre-mod-esp}, recall that $\C{k}{n-2}{\lambda}$ is a generalized Fermat curve of the type $(k,n)$, and that  $F:={\mathrm{Fix}} H$.   
 
If $p \in F$, then, by using linear substitutions in the system of equations, we may assume that $p\in {\mathrm{Fix}}(\varphi_1)$, that is, 
$$p:=[1:0:\rho_1:\rho_2:\cdots:\rho_{n-1}],$$
where $\rho_i^{k}=-{\lambda}_{i-1}$, $1\leq i\leq n-1$ (with ${\lambda}_{0}=1$).

Let $f_0:\C{k}{n-2}{\lambda} \rightarrow {\mathbb{P}}^n$ be the standard embedding defined by the inclusion $\C{k}{n-2}{\lambda} \subset {\mathbb{P}}^n$.

The next theorem describes the hyperosculating points of $\C{k}{n-2}{\lambda}$ and the ramification indexes.

\begin{theorem}[\cite{HKLP16}]\label{th:hyperosc}
\mbox{}
\begin{enumerate}
\item The set of hyperosculating points of $\C{k}{n-2}{\lambda}$ is $F$. 
\item  If $p\in F$, then $b_1(p)=k-2$ and $b_l(p)=k-1$,  $l \in \{2,\ldots, n-1\}$.
\end{enumerate}
\end{theorem}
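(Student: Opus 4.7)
The theorem splits cleanly: part (2) is a local calculation at a single point of $F$ that I can reduce by symmetry to $p \in \mathrm{Fix}(\varphi_1)$, while part (1) follows from (2) by a global ramification count using Pl\"ucker.

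\emph{Local setup.} After a linear change of the $\varphi_j$'s (a symmetry of the defining equations), I may take $p=[1:0:\rho_1:\cdots:\rho_{n-1}]$ with $\rho_i^k=-\lambda_{i-1}$ and $\lambda_0:=1$. On the affine chart $\{x_0=1\}$ the defining equations solve for each $x_{i+1}$ as an analytic branch of $\rho_i(1+z^k/\lambda_{i-1})^{1/k}$ in the uniformizer $z:=x_1$, since $\partial(x_{i+1}^k)/\partial x_{i+1}=k\rho_i^{k-1}\neq 0$ at $p$. The standard embedding is then parametrized near $p$ by
\[
v(z) = \bigl(1,\, z,\, \rho_1(1+z^k/\lambda_0)^{1/k},\, \ldots,\, \rho_{n-1}(1+z^k/\lambda_{n-2})^{1/k}\bigr),
\]
and each $v_{i+1}$ (for $i\geq 1$) is a power series in $z^k$.

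\emph{Normal form.} The projective substitution $v_{i+1}\mapsto v_{i+1}-\rho_i v_0$ produces $n+1$ components of orders $0,1,k,k,\ldots,k$; the coefficient of $z^{mk}$ in the shifted $(i+1)$-st component is $\rho_i\binom{1/k}{m}\lambda_{i-1}^{-m}$. To raise these $n-1$ equal-order components to orders $k,2k,\ldots,(n-1)k$ by further projective row operations, I need the $(n-1)\times(n-1)$ matrix with entries $\lambda_{i-1}^{-m}$ (for $1\leq i,m\leq n-1$) to be invertible. After peeling off diagonal scalars this is a Vandermonde matrix in the variables $\lambda_0^{-1},\ldots,\lambda_{n-2}^{-1}$, whose determinant is nonzero precisely because $\lambda_0=1,\lambda_1,\ldots,\lambda_{n-2}$ are pairwise distinct and nonzero --- exactly the condition defining $\mathcal{P}_n$. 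Hence the normal-form orders at $p$ are $(0,1,k,2k,\ldots,(n-1)k)$. Matching against the sequence $l+\sum_{j\leq l}\alpha_j$ yields $\alpha_1=0$, $\alpha_2=k-2$, and $\alpha_l=k-1$ for $l\geq 3$, so Proposition \ref{pr:ind-ram} gives $b_1(p)=k-2$ and $b_l(p)=k-1$ for $l\in\{2,\ldots,n-1\}$, proving (2).

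\emph{Part (1).} Since (2) gives $b_l(p)>0$ for every $p\in F$ and $l\geq 2$, $F$ lies in the hyperosculating locus. For the reverse inclusion, the sets $\mathrm{Fix}(\varphi_j)$ are pairwise disjoint of size $k^{n-1}$, so $|F|=(n+1)k^{n-1}$ and the contribution of $F$ to $\sum_{l=1}^{n-1}b_l$ is $(n+1)k^{n-1}\bigl[(k-2)+(n-2)(k-1)\bigr]$. Summing the Pl\"ucker identities (Theorem \ref{th:plucker}) telescopes $\sum_l b_l$ into a combination of the degrees $d_l$ of the associated curves; these can be pinned down inductively from $d_0=k^{n-1}$ (Bezout for the complete intersection) together with $d_1$, computed via the natural projection $\pi:\C{k}{n-2}{\lambda}\to\C{k}{n-3}{\lambda}$ of Section \ref{ss:natural-branched-covering}. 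Comparing totals leaves no ramification outside $F$.

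\emph{Main obstacle.} The heart of part (2) is the Vandermonde invertibility, which is exactly the condition built into $\mathcal{P}_n$, so the algebra is short once set up. The heavier work is in part (1): correctly carrying out the Pl\"ucker totalization and pinning down the degrees $d_l$ of the higher associated curves, which require the inductive structure furnished by the tower of natural projections among generalized Fermat curves.
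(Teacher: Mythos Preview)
The paper does not give its own proof of this theorem; it is quoted from \cite{HKLP16}. So there is no in-paper argument to compare against, and I assess your attempt on its own merits.

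Your treatment of part (2) is correct and clean. The reduction of the normal-form computation at $p\in\mathrm{Fix}(\varphi_1)$ to the invertibility of the matrix $\bigl(\lambda_{i-1}^{-m}\bigr)_{1\le i,m\le n-1}$ is right (the scalars $\rho_i$ and $\binom{1/k}{m}$ are nonzero for $k\ge 2$), and this is exactly a Vandermonde in the $\lambda_{i-1}^{-1}$, nonvanishing precisely on $\mathcal{P}_n$. The orders $0,1,k,2k,\dots,(n-1)k$ and the values of $b_l(p)$ follow.

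Part (1), however, has a real gap. Telescoping the Pl\"ucker identities gives $\sum_{l=0}^{n-1}b_l=n(2g-2)+d_0+d_{n-1}$, so ``comparing totals'' requires $d_{n-1}$ independently; knowing $d_0$ and $d_1$ does \emph{not} pin down the higher $d_l$ via Pl\"ucker without already knowing the $b_l$, which is circular. Your inductive plan via the projection $\pi$ is not spelled out and it is unclear what invariant of the associated curves it would control. The Pl\"ucker route \emph{can} be closed, but differently: defining $\hat b_l$ as the contribution from $F$ and solving Pl\"ucker forward from $d_{-1}=0$, $d_0=k^{n-1}$ with $b_l=\hat b_l$ yields $\hat d_l=k^{n-1}(n-l)(lk-1)$ for $l\ge 1$, so $\hat d_n=0$; then $\delta_l:=d_l-\hat d_l$ has nonpositive second differences (since $b_l\ge\hat b_l$) and $\delta_{-1}=\delta_0=\delta_n=0$, which forces $\delta\equiv 0$ and hence $b_l=\hat b_l$.

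There is also a much shorter direct argument for (1), parallel to your part (2), which is almost certainly what underlies Corollary~\ref{co:hyperosc}. For $p\notin F$ set $x_0=1$, write $a_j=x_j(p)\ne 0$, and use the local parameter $w:=x_1^k-a_1^k$. Then $x_1^k=a_1^k+w$ and, from the defining equations, $x_j^k=a_j^k-w$ for $j\ge 2$, so
\[
x_j-a_j=\sum_{m\ge 1}\binom{1/k}{m}\,\epsilon_j^{\,m}\,a_j^{\,1-mk}\,w^m,\qquad \epsilon_1=1,\ \epsilon_j=-1\ (j\ge 2).
\]
The $n\times n$ matrix of coefficients in $w,\dots,w^n$ factors (up to nonzero row and column scalars) as the Vandermonde in $\mu_j:=\epsilon_j a_j^{-k}$, i.e.\ in the reciprocals of $a_1^k$ and $\lambda_{j-2}+a_1^k$; these are pairwise distinct exactly because $0,1,\lambda_1,\dots,\lambda_{n-2}$ are pairwise distinct. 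Hence the normal-form orders at $p$ are $0,1,\dots,n$ and $b_l(p)=0$ for every $l$, which gives (1) without any global count.
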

 
A consequence of the above is the following.

\begin{corollary}\label{co:hyperosc}
Let $z$ be  a local chart of  $\C{k}{n-2}{\lambda}$  around the point  $p\in \C{k}{n-2}{\lambda}$. Then the normal form of 
$f_0$ in  $z(p):=0$  is given as follows.
\begin{enumerate}
\item If $p\in F$, then 
$$f_0(z)=[1:z: g_0(z^{k}):g_1(z^{k}):\cdots :g_{i}(z^{k}):\cdots : g_{n-1}(z^{k})],$$
where the  $g_i$ are holomorphic functions such that $g_i(z)=z^{i+1}+\cdots$.

\item If $p\not \in F$, then  
$$f_0(z)=[1:z: z^2+\cdots :\cdots : z^{(n-1)}+\cdots].$$
\end{enumerate}
\end{corollary}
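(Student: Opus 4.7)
The statement packages two pieces of information: the sequence of leading exponents in the normal form, and, in case (1), the structural claim that the higher coordinates depend only on $z^k$. I treat these separately.

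The exponents come directly from Proposition \ref{pr:ind-ram}: since $\alpha_{l+1} = b_l(p)$, Theorem \ref{th:hyperosc}(2) gives, for $p \in F$, the values $\alpha_1 = 0$, $\alpha_2 = k-2$, and $\alpha_j = k-1$ for $3 \leq j \leq n$. The cumulative sums $j + \alpha_1 + \cdots + \alpha_j$ collapse to $0, 1, k, 2k, \ldots, (n-1)k$ at positions $0, 1, \ldots, n$. For $p \notin F$, Theorem \ref{th:hyperosc}(1) says $p$ is not hyperosculating, so every $b_l(p) = 0$ and therefore every $\alpha_j = 0$ for $j \geq 2$; the exponents become $0,1,2,\ldots,n$. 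This already produces the form asserted in case (2) from the general normal form recalled before Proposition \ref{pr:ind-ram}.

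For case (1) I would exploit the stabilizer $\langle \varphi_1 \rangle$. After a linear substitution in the defining equations one may assume $p \in {\mathrm{Fix}}(\varphi_1)$ and write $p = [1:0:\rho_1:\cdots:\rho_{n-1}]$ with $\rho_i^k = -\lambda_{i-1}$ and $\lambda_0 := 1$. In the affine chart $x_0 = 1$, the $i$-th defining equation reads $x_{i+1}^k = -\lambda_{i-1} - x_1^k$; since the $\rho_i$ are nonzero, the implicit function theorem makes $z := x_1$ a local parameter at $p$, with
$$
x_{i+1}(z) = \rho_i \bigl(1 + z^k/\lambda_{i-1}\bigr)^{1/k}.
$$
The automorphism $\varphi_1$ fixes $p$, acts on $z$ by $z \mapsto w_k z$, and leaves every other coordinate pointwise invariant; hence each $x_{i+1}(z)$ is invariant under $z \mapsto w_k z$ and lies in $\mathbb{C}[[z^k]]$.

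To finish, I would reach the normal form by projective changes of coordinates. The translation $[y_0:\cdots:y_n] \mapsto [y_0: y_1: y_2 - \rho_1 y_0: \cdots: y_n - \rho_{n-1} y_0]$ turns positions $j \geq 2$ into power series in $z^k$ that vanish at $z = 0$, each of order exactly $z^k$ by the explicit formula above. A triangular $\mathbb{C}$-linear substitution among these $n-1$ coordinates then yields strictly increasing orders in $z^k$, and a final diagonal scaling normalizes the leading coefficients to $1$. The delicate step—and the one I would verify carefully—is that the triangularization produces precisely the orders $(j-1)k$ at position $j$, with no gaps: any gap would enlarge some cumulative exponent beyond the value dictated by the $\alpha_j$'s of the second paragraph, contradicting Theorem \ref{th:hyperosc}(2) via Proposition \ref{pr:ind-ram}.
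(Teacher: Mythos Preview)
Your proposal is correct. The paper itself offers no proof of this corollary beyond the sentence ``A consequence of the above is the following,'' so you are supplying exactly the argument the authors leave implicit: read off the exponent sequence from Theorem~\ref{th:hyperosc} via Proposition~\ref{pr:ind-ram}, and, for $p\in F$, use the explicit local parametrization by $z=x_1$ (equivalently, the $\langle\varphi_1\rangle$-invariance) to see that the remaining coordinates are power series in $z^k$. Your closing step---that the triangularization among the last $n-1$ coordinates cannot produce gaps because the $\alpha_j$ are intrinsic invariants of $f_0$ at $p$---is the right way to pin the orders down to $k,2k,\ldots,(n-1)k$, and is consistent with how the paper later uses this normal form (cf.\ the parametrization $f_0(z)=[1:h_1(z^k):\cdots:h_{n-1}(z^k):z]$ in the proof of Lemma~\ref{le:open-zariski}).
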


\subsection{The Weierstrass  weight of the hyperosculating points of generalized Fermat curves}
\label{ssec:aut-hiposc}
Let us keep the notations from the previous sections.  
Given a curve $C$, the Weierstrass  weight  of $p\in C$ is 
$$w(p):=\sum_{i=1}^{g}(a_i-i),$$
where the $a_i$ are the Weierstrass gaps of $p$ (see  Remark \ref{re:gap-W}).

In general, the computation of $w(p)$, when $p$ is a Weierstrass point (which is to say $w(p)>0$), 
is not an easy problem. Theorem  \ref{th:hyperosc} asserts that the hyperosculating points of the 
generalized Fermat curve  $\C{k}{n-2}{\lambda}$ are exactly the points in the set $F$.
We will determine an optimal lower bound for the weight of these points and observe that 
this bound is sharp in a  dense open set of the moduli space of the generalized Fermat curves of type $(k,n)$.\\

First, we fix some notations.
Let $I(\C{k}{n-2}{\lambda}):=\langle x_0^k+x_1^k+x_2^k,...,\lambda_{n-2}x_0^k+x_1^k+x_{n}^k\rangle$
be the homogeneous prime ideal 
of  $\C{k}{n-2}{\lambda}$ in ${\mathbb{C}}[x_0,...,x_n]$, 
let $\Gamma(\C{k}{n-2}{\lambda}):={\mathbb{C}}[x_0,...,x_n]/I(\C{k}{n-2}{\lambda})$ 
be the homogeneous coordinate ring of $\C{k}{n-2}{\lambda}$, and let $\mathcal{O}_{{\mathbb{P}}^n}(m)$, $m\in {\mathbb{Z}}$, be the twisting sheaf; for $m\geq 0$ the sheaf  $\mathcal{O}_{{\mathbb{P}}^n}(m)$ is generated by the forms  of degree $m$ of ${\mathbb{C}}[x_0,...,x_n]$.

Let us consider the  sheaf over $\C{k}{n-1}{\lambda}$ given by
$$\mathcal{O}_{\C{k}{n-2}{\lambda}}(m):=f_0^{\star}\mathcal{O}_{{\mathbb{P}}^n}(m),$$
where $f_0:\C{k}{n-2}{\lambda}\hookrightarrow {\mathbb{P}}^n$ is the natural embedding of the generalized Fermat curves.
To simplify the notation, when it is clear that we are referring to the sheaf $\mathcal{O}_{\C{k}{n-2}{\lambda}}(m)$  we will simply use the notation
$\mathcal{O}(m)$.  Observe that $H^{0}(\C{k}{n-2}{\lambda},\mathcal{O}(m))=\Gamma(\C{k}{n-2}{\lambda})_m$,
where  $\Gamma(\C{k}{n-2}{\lambda})_m$ are the forms of degree  $m$ of $\Gamma(\C{k}{n-2}{\lambda})$.

As  $I:=I(\C{k}{n-2}{\lambda})$ is a homogeneous prime ideal, we have that 
 $I=\bigoplus_{m\geq 0}I_m$ and $\Gamma(\C{k}{n-2}{\lambda})= \bigoplus_{m\geq 0}\mathbb{C}[x_0,...,x_n]_m/I_m$,
where $I_m$ are the forms of degree $m$ of $I$ (observe that $I_0=\cdots =I_{k-1}=0$). 
In particular we have a surjective linear transformation 
$${\mathbb{C}}[x_0,...,x_n]_m\rightarrow \Gamma(\C{k}{n-2}{\lambda})_m=
 \mathbb{C}[x_0,...,x_n]_m/I_m  \hspace{0.5cm} (\star) $$

If ${\mathbb{P}}(\Gamma(\C{k}{n-2}{\lambda})_m)$ denotes the projective space associated to the vector
space $\Gamma(\C{k}{n-2}{\lambda})_m$, then the  linear transformation $(\star)$  induces an embedding
$${\mathbb{P}}(\Gamma(\C{k}{n-2}{\lambda})_m)\subset {\mathbb{P}}({\mathbb{C}}[x_0,...,x_n]_m)\cong {\mathbb{P}}^{d(m)},$$ 
where $d(m)=\binom{n+m}{m}-1$. In fact we can realize $\mathbb{P}(\Gamma(\C{k}{n-2}{\lambda})_m)$
as a linear projective space of ${\mathbb{P}}^{d(m)}$.

If $C$ is an algebraic curve, then we denote by $\omega_C$ its respective canonical sheaf. 
When given the context it is clear that we are referring to a curve $C$, we use the notation $\omega$ in place of $\omega_C$. 

As $\omega\cong \mathcal{O}_{\C{k}{n-2}{\lambda}}(r)$, $r:=(n-1)(k-1)-2$, (\cite[page 188]{Har77}), the Veronesse map of degree $r$,
$\nu_r:{\mathbb{P}}^n\rightarrow {\mathbb{P}}^{d(r)}$, permits us to obtain the canonical embedding  $f_c$. 
  
\begin{center}
$\xymatrix{\C{k}{n-2}{\lambda} \ar@^{(->}[rr]_{f_0} \ar@/^1pc/[rrr]^{f_c:=\nu_r\circ f_0}&
& {\mathbb{P}}^{n}\ar@{-->}[r] \ar@/_1pc/[rr]_{\nu_r} &{\mathbb{P}}(\Gamma(\C{k}{n-2}{\lambda})_r) \ar@^{(->}[r]& {\mathbb{P}}^{d(r)}. }$
\end{center}
 
\begin{remark}
To define the rational maps 
$\xymatrix{{\mathbb{P}}^{n}\ar@{-->}[r]  &{\mathbb{P}}(\Gamma(\C{k}{n-2}{\lambda})_r)}$
it is necessary to fix a basis $\mathcal{B}$ of the vector space $\Gamma(\C{k}{n-2}{\lambda})_r)$.  Observe that for $(k,n)\neq(2,5)$ the rational map is a well defined morphism. 
As for $(k,n)\neq(2,5)$ we have that $r\neq k$, there exists a basis $\mathcal{B}$ of  $\Gamma(\C{k}{n-2}{\lambda})_r)$ such that $x_0^r,...,x_n^r\in \mathcal{B}$.  In the case $(k,n)=(2,5)$ we can suppose that $x_0^2, x_1^2 \in \mathcal{B}$.
\end{remark}

The normal form of $f_0$ in $p\in F$ (Corollary \ref{co:hyperosc}), will provide information about the normal form
 of canonical embedding $f_c$ at $p\in F$. More precisely, as 
 all elements of $p(x_0,x_1,...,x_n)\in \Gamma(\C{k}{n-2}{\lambda})_m$ 
 can be written uniquely in the form
 $$p(x_0,x_1,...,x_n)=\sum_{j=1}^{k-1}x_n^{j}q_j(x_0,...,x_{n-1}),$$
 where $q_j(x_0,...,x_{n-1})\in  \Gamma(\C{k}{n-2}{\lambda})_{m-j}$,
 it follows that the vector space 
 $\Gamma(\C{k}{n-2}{\lambda} )_m$ 
  has the following decomposition
 $$\displaystyle \Gamma(\C{k}{n-2}{\lambda})_m:=\bigoplus_{j=0}^{k-1} x_n^{j}Q(m-j),$$
 where $Q(m-j)\subset \Gamma(\C{k}{n-2}{\lambda})_{m-j}$.\\

Let us choose a basis $v_0(x_0,...,x_{n-1}),\cdots,v_{t_j}(x_0,...,x_{n-1})$  of the vector space  $Q(r-j)$, where $t_j:=\dim_{\mathbb{C}}Q(r-j)-1$, and $0\leq j\leq r-1$. Therefore we can construct a rational map:
$$\xymatrix{{\mathbb{P}}^{n}\ar@{-->}[r] &{\mathbb{P}}(Q(r-j))}:[x_0:x_1:\cdots x_n]\mapsto [v_0:v_1:\cdots v_{t_j}].$$
	
We  may assume that $v_0(x_0,...,x_{n-1}):=x_0^{r-j}$, and $v_1(x_0,...,x_{n-1}):=x_1^{r-j}$. 
 In fact, if $x_0^{r-j}, x_1^{r-j}$  are linearly dependent in $Q(r-j)$, 
then there exists $a\in \mathbb{C}$ such that $x_0^{r-j}+ax_1^{r-j}\in I(\C{k}{n-2}{\lambda})$. 
 As  $I(\C{k}{n-2}{\lambda})$ is a prime ideal,  there exists  $a'\in \mathbb{C}$ such that $x_0-a'x_1\in I(\C{k}{n-2}{\lambda})$,
  a contradiction since $I_1=0$. 
 In thus way, we observe that  the locus of indeterminacy of the  above rational map  is contained in the linear space $L_{(0,1)}:=\{[x_0:\cdots:x_n]|x_i=x_1=0\}$. 
   By Remark  \ref{re:rest}, its restriction to $\C{k}{n-2}{\lambda}$ 
is a well defined morphism, which we denote by the symbol $k_j$.
In this manner we have constructed the following diagram:	

$$\xymatrix{ \C{k}{n-2}{\lambda} \ar@^{(->}[r] \ar@/^1pc/[rr]^{k_j}&{\mathbb{P}}^{n}\ar@{-->}[r]&{\mathbb{P}}(Q(r-j))},$$
 
The  morphisms  $k_j$ allow us to study the morphism $f_c$ at $p\in F$.
 
Now, as seen in Section \ref{ss:natural-branched-covering}, if we consider restriction of the rational map  
$$\pi: {\mathbb{P}}^{n} \dashrightarrow {\mathbb{P}}^{n-1}: [x_0:\cdots:x_n] \mapsto [x_0:\cdots:x_{n-1}]$$
to the generalized Fermat curve $\C{k}{n-2}{\lambda}$, then we obtain the morphism  (for $n=3 $, 
the curve $\C{k}{n-3}{\lambda}$ is the classic Fermat curve $C^k$)
$$\pi:\C{k}{n-2}{\lambda}\rightarrow \C{k}{n-3}{\lambda};[x_0:\cdots x_n]\mapsto [x_0:\cdots:x_{n-1}].$$

Observe that $\pi$ is a Galois branched covering of degree $k$ defined by quotienting 
by the cyclic group of order $k$ generated by the automorphism 
$$\varphi_n([x_0:\cdots:x_n]):=[x_0:\cdots:x_{n-1}:w_kx_n], \quad w_k:={\mathrm{e}}^{\frac{2\pi i}{k}}.$$

In particular, the morphism $\pi$ defined in the local chart $z$, around the point $p$, is of the form  
$$\zeta:=\pi(z)=z^{k}.$$

By the construction, we may see that $\pi$ factorizes the morphism $k_j$, and we obtain the following diagram

\begin{equation}\label{diagrama2}
\xymatrix{ \C{k}{n-2}{\lambda} \ar[d]^{\pi}  \ar@{^{(}->}[rr] \ar@/^1pc/[rrrr]^{k_j}  & & 
{\mathbb{P}}^{n}\ar@{-->}[d]^{\pi} 
\ar@{-->}[rr]&& {\mathbb{P}}(Q(r-j)) \\
\C{k}{n-3}{\lambda} \ar@{^{(}->}[rr]\ar@/_3pc/[rrrru]_{g_j} & &{\mathbb{P}}^{n-1}\ar@{-->}[rru]&&}
\end{equation}

Analogous to what we have previously seen, the locus of indeterminacy (this can be empty set) of the rational map
  $\xymatrix{{\mathbb{P}}^{n-1}\ar@{-->}[r] &{\mathbb{P}}(Q(r-j))}$
 is contained in $L_{(0,1)}:=\{[x_0:\cdots:x_{n-1}
|x_0=x_1=0\}$,  so $g_j$ is a well defined morphism.
 To study the morphism $k_j$ we strongly use the morphism $g_j$.
 
The following result gives us the dimension of the $Q(r-j)$.  

\begin{proposition} \label{pr:dim-s} 
If $s(r-j):= 
 \dim_{{\mathbb{C}}}Q(r-j)$, where  $r:=(n-1)(k-1)-2$ and  $0\leq j\leq k-1$, then
$$s(r-j)=\frac{1}{2} k^{n-2} ((n(k-1) -2 - 2 j)+ \delta_{k-1, j},$$
 where $\delta_{k-1, j}$ is the Kronecker delta.
 \end{proposition}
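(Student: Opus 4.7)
The plan is to reduce the computation to a Riemann--Roch calculation on the projected generalized Fermat curve $\C{k}{n-3}{\lambda} = \pi(\C{k}{n-2}{\lambda}) \subset \mathbb{P}^{n-1}$, which is itself a smooth complete intersection of type $(k,n-1)$. The first step is to identify the vector space $Q(r-j)$ with the graded component $\Gamma(\C{k}{n-3}{\lambda})_{r-j}$. Using the relation $x_n^k \equiv -\lambda_{n-2}x_0^k - x_1^k \pmod{I(\C{k}{n-2}{\lambda})}$, every class in $\Gamma(\C{k}{n-2}{\lambda})_m$ admits the unique expansion $\sum_{j=0}^{k-1} x_n^j q_j$ asserted in the text. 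Combined with the fact that the first $n-2$ defining equations of $\C{k}{n-2}{\lambda}$ do not involve $x_n$, this yields the ideal-theoretic equality $I(\C{k}{n-2}{\lambda}) \cap \mathbb{C}[x_0,\ldots,x_{n-1}] = I(\C{k}{n-3}{\lambda})$, and hence the natural isomorphism $Q(r-j) \cong \Gamma(\C{k}{n-3}{\lambda})_{r-j}$.

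Once this identification is made, the computation becomes standard. Since $\C{k}{n-3}{\lambda}$ is a smooth complete intersection in $\mathbb{P}^{n-1}$, it is projectively normal, so $\dim \Gamma(\C{k}{n-3}{\lambda})_m = \dim H^0(\C{k}{n-3}{\lambda}, \mathcal{O}(m))$ for every $m \geq 0$. Set $r' := (n-2)(k-1) - 2$. By the discussion preceding the proposition, $\omega_{\C{k}{n-3}{\lambda}} \cong \mathcal{O}(r')$, while the embedding has degree $k^{n-2}$ and arithmetic genus $g_{k,n-1} = \tfrac{1}{2}(k^{n-2}r' + 2)$. The key observation is $r - j - r' = (k-1) - j$, which is non-negative on the range $0 \leq j \leq k-1$, with equality iff $j = k-1$.

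If $0 \leq j \leq k-2$, then $r - j > r'$, so Serre duality combined with $\deg\mathcal{O}(r'-(r-j)) < 0$ gives $H^1(\C{k}{n-3}{\lambda}, \mathcal{O}(r-j)) = 0$, and Riemann--Roch yields
$$s(r-j) = k^{n-2}(r-j) - g_{k,n-1} + 1.$$
A direct substitution reduces this to $\tfrac{1}{2} k^{n-2}(n(k-1) - 2 - 2j)$. If instead $j = k-1$, then $\mathcal{O}(r-j) \cong \omega$ and hence $s(r-j) = \dim H^0(\omega) = g_{k,n-1} = \tfrac{1}{2} k^{n-2}((n-2)(k-1)-2) + 1$, which is exactly $\tfrac{1}{2} k^{n-2}(n(k-1) - 2 - 2(k-1)) + 1$, accounting for the extra $\delta_{k-1,j}$ contribution.

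The main technical obstacle lies in the first step: verifying rigorously both the uniqueness of the $x_n$-expansion modulo $I(\C{k}{n-2}{\lambda})$ and the equality $I(\C{k}{n-2}{\lambda}) \cap \mathbb{C}[x_0,\ldots,x_{n-1}] = I(\C{k}{n-3}{\lambda})$. Although intuitively clear from the sequential structure of the defining relations (each introducing one new variable $x_i$), a careful Gr\"obner-basis-style argument is needed to confirm that $\Gamma(\C{k}{n-2}{\lambda})_m = \bigoplus_{j=0}^{k-1} x_n^j Q(m-j)$ is indeed a direct sum decomposition with the claimed summands. Once this is in hand, the remaining analysis is classical Riemann--Roch applied in two cases distinguished by whether $\mathcal{O}(r-j)$ is strictly above or exactly equal to the canonical divisor of the smaller Fermat curve.
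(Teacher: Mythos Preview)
Your proposal is correct and follows essentially the same approach as the paper: identify $Q(r-j)$ with $H^{0}(\C{k}{n-3}{\lambda},\mathcal{O}(r-j))$ and apply Riemann--Roch on the smaller generalized Fermat curve, the $\delta_{k-1,j}$ term arising from $h^{0}(\mathcal{O}(j-k+1))$. The only cosmetic difference is that the paper applies Riemann--Roch to the Serre-dual bundle $\mathcal{O}(-k+1+j)$ rather than to $\mathcal{O}(r-j)$ directly, and it simply asserts the isomorphism $Q(r-j)\cong H^{0}(S',\mathcal{O}_{S'}(r-j))$ where you supply the projective-normality and ideal-intersection justifications.
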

\begin{proof} Let us fix a generalized Fermat curve $\C{k}{n-2}{\lambda}$, and consider the generalized Fermat curve $S'=\C{k}{n-3}{\lambda}$ of type $(n-1,k)$. We note that 
$$H^{0}(S',\mathcal{O}_{S'}(r-j))\cong Q(r-j).$$

For $m\in{\mathbb{Z}}$,  let $h'(m)$ denote the dimension of the global section space $H^{0}(S',\mathcal{O}_{S'}(m))$ over ${\mathbb{C}}$.
Recall that $\omega_{S'}\cong \mathcal{O}_{S'}((n-2)(k-1)-2)$. By the Riemann-Roch Formula, 
$$h'(-k+1+j))-h'(r'-(-k+1+j))=(-k+1+j)k^{n-2}-\left(\frac{k^{n-2} r' +2}{2}\right)+1,$$
where $r':=(n-2)(k-1)-2$. Since $s(r-j)= h'(r'-(-k+1+j))$ and  $h'(-k+1+j))=\delta_{k-1, j}$, we obtain the desired equality.
\end{proof}

Next, we estimate, from below, the weight of the points in $F$.

\begin{theorem}\label{th:w-peso}
Let $p \in F$ and let $w(p)$ be its weight. If $n\geq 3$, then
$$\widehat{w}(p):=\frac{1}{24}(k-1) (k^{n-1}-2) (k^{n} + k^{n-1}-12)\leq w(p).$$
Moreover, there exists a dense open set  of $\mathcal{F}(k,n)$ where equality holds.  
\end{theorem}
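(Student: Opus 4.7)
The plan is to bound, from below, the vanishing orders at $p$ of canonical sections by decomposing $H^0(\omega)$ into isotypic components of the cyclic automorphism $\varphi_1$ fixing $p$, and then to deduce sharpness by upper-semicontinuity of the Weierstrass weight.

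First I would describe an explicit monomial basis of the canonical space. Since $\C{k}{n-2}{\lambda}$ is a complete intersection, iterated use of the defining equations $x_j^k=-\lambda_{j-2}x_0^k-x_1^k$ for $j\geq 2$ (with $\lambda_0:=1$) to reduce $x_j$-exponents modulo $k$ gives
$$
\Gamma(\C{k}{n-2}{\lambda})_r \;=\; \bigoplus_{(j_2,\ldots,j_n)\in\{0,\ldots,k-1\}^{n-1}} x_2^{j_2}\cdots x_n^{j_n}\cdot \mathbb{C}[x_0,x_1]_{r-\sum_l j_l},
$$
so that $H^0(\omega)\cong \Gamma(\C{k}{n-2}{\lambda})_r$ has as basis the monomials $x_0^{i_0}x_1^{i_1}x_2^{i_2}\cdots x_n^{i_n}$ of total degree $r=(n-1)(k-1)-2$ with $i_0,i_1\geq 0$ and $i_j\in\{0,\ldots,k-1\}$ for $j\geq 2$.

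Next I would choose the local parameter $z$ at $p=[1:0:\rho_1:\ldots:\rho_{n-1}]$ so that $\varphi_1^*z=w_k z$; concretely, $z$ is a unit multiple of $x_1/x_0$. Because $\varphi_1$ fixes every $x_j$ with $j\neq 1$, each ratio $x_j/x_0$ for $j\geq 2$ is $\varphi_1$-invariant (hence a power series in $z^k$) and is a unit at $p$ (since $x_j(p)/x_0(p)=\rho_{j-1}\neq 0$); consequently, each basis monomial $x^I/x_0^r$ has vanishing order exactly $i_1$ at $p$. Decomposing $H^0(\omega)=\bigoplus_{\ell=0}^{k-1}H^0(\omega)_\ell$ into isotypic components under $\varphi_1$, the monomial $x^I$ lies in $H^0(\omega)_\ell$ iff $i_1\equiv\ell\pmod k$, and every section in $H^0(\omega)_\ell$ vanishes at $p$ to an order $\equiv\ell\pmod k$. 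The jumps of the vanishing filtration on $H^0(\omega)_\ell$ are thus distinct integers in the residue class $\ell$ modulo $k$, bounded below by $\ell,\ell+k,\ldots,\ell+(D_\ell-1)k$, where $D_\ell:=\dim_{\mathbb{C}}H^0(\omega)_\ell$. Summing over $\ell$ and subtracting $\binom{g_{k,n}}{2}$ yields
$$
w(p)\;\geq\;\sum_{\ell=0}^{k-1}\!\Bigl(\ell D_\ell+k\binom{D_\ell}{2}\Bigr)-\binom{g_{k,n}}{2}.
$$
It then remains to compute the $D_\ell$ by counting monomials with prescribed $i_1\pmod k$ (this gives $D_\ell=\sum_{t\geq 0,\,\ell+tk\leq r} C(\ell+tk)$, where $C(b)$ is the number of tuples $(i_2,\ldots,i_n)\in\{0,\ldots,k-1\}^{n-1}$ with $\sum_l i_l\leq r-b$) and reduce the resulting double sum to the closed form $\widehat{w}(p)$ by an algebraic manipulation.

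For the sharpness, equality in the lower bound is equivalent to the vanishing sequence of each $H^0(\omega)_\ell$ at $p$ being exactly $\{\ell,\ell+k,\ldots,\ell+(D_\ell-1)k\}$. This is an open condition on $\lambda$ (imposed by the non-vanishing of explicit minors in the power-series coefficients of the $x_j/x_0$ as functions of $z^k$); combined with the upper-semicontinuity of the Weierstrass weight on the moduli space $\mathcal{P}_n/\mathfrak{S}_{n+1}$, it suffices to exhibit a single value of $\lambda$ where equality holds, and this can be verified by direct computation at, for instance, a convenient symmetric configuration. The main obstacle is the algebraic reduction of $\sum_\ell(\ell D_\ell+k\binom{D_\ell}{2})-\binom{g_{k,n}}{2}$ to $\tfrac{1}{24}(k-1)(k^{n-1}-2)(k^n+k^{n-1}-12)$: although only elementary identities are used (in particular $\sum_\ell D_\ell=g_{k,n}$, together with the symmetry $b\leftrightarrow r-b$ in the count $C(b)$), bookkeeping the partial sums of the coefficients of $\bigl((1-T^k)/(1-T)\bigr)^{n-1}$ demands care.
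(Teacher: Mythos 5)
Your derivation of the inequality is essentially the paper's argument in different clothing: you decompose the canonical space into eigenspaces of the order-$k$ stabilizer of $p$ and use that the vanishing orders within the eigenspace of character $\ell$ are distinct integers $\equiv \ell \pmod k$, hence at least $\ell,\ell+k,\ldots,\ell+(D_\ell-1)k$. The paper does the same thing, with $p\in\mathrm{Fix}(\varphi_n)$ and the decomposition $\Gamma(\C{k}{n-2}{\lambda})_r=\bigoplus_j x_n^jQ(r-j)$, except that it identifies each eigenspace with $H^{0}(\C{k}{n-3}{\lambda},\mathcal{O}(r-j))$ on the quotient curve and gets the dimensions $s(r-j)$ at once from Riemann--Roch (Proposition \ref{pr:dim-s}), whereas you count monomials; your $D_\ell$ agrees with $s(r-\ell)$ (e.g.\ for $(k,n)=(2,4)$ both give $4,1$ and the bound $3$), so that half is sound, modulo the closed-form reduction you defer.

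The genuine gap is in the sharpness statement. Equality being cut out by the non-vanishing of an analytic function of $\lambda$ gives openness, but the theorem needs that function to be not identically zero, i.e.\ the existence of at least one $\lambda$ where equality holds for the points of $F$ — and you do not produce one. Saying it ``can be verified by direct computation at a convenient symmetric configuration'' is precisely where the plan breaks: in the language of the quotient cover $\pi:\C{k}{n-2}{\lambda}\to\C{k}{n-3}{\lambda}$, equality at $p$ means that $\pi(p)$ avoids the hyperosculating loci of \emph{all} the maps $g_j:\C{k}{n-3}{\lambda}\to{\mathbb P}(Q(r-j))$, and these loci are hard to control at a chosen point (for $n=3$ they involve the full Weierstrass locus of the classical Fermat curve, which is not known in general; cf.\ Watanabe's extra points for $k=6$). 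Worse, symmetric choices are exactly where the coincidence tends to occur: Example \ref{ej:bound} ($\lambda=-1$, type $(5,3)$) is such a symmetric configuration and there the bound is strict, because $\pi(p)=[1:1:-\sqrt[5]{2}]$ is a Leopoldt Weierstrass point of $C^5$. The paper avoids any explicit verification via Lemma \ref{le:open-zariski}: fixing $\hat\lambda_1,\ldots,\hat\lambda_{n-3}$ and letting $\lambda_{n-2}$ vary, the images $\pi(\mathrm{Fix}(\varphi_n))$ sweep out infinitely many points of the \emph{fixed} quotient curve while each hyperosculating set is finite, so all but finitely many parameters in the slice work; this non-emptiness step (or some substitute for it) is missing from your proposal, and upper-semicontinuity alone cannot supply it.
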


\begin{remark}\label{ej:cla-fer}
In the case of a generalized Fermat curve of the type  $(k,2)$, $k\geq 4$, (a classic Fermat curve), 
it is known that (see \cite{Has50,Roh82,Wat99}) the weight of a point $p\in F$ is
$w(p)=\frac{1}{24}(k-1)(k-2)(k-3)(k+4)$, which shows that equality in Theorem \ref{th:w-peso} holds for the classic case.
\end{remark}
 
Before proving Theorem \ref{th:w-peso}, we discuss two examples. In the first
one we observe that the equality, in the previous theorem, holds for a generalized 
Fermat curve of type $(2,4)$ and in the second one we provide examples for which the bound is not sharp.
 
\begin{example}
\label{ej:(2,4)}
Let  $C$ be a generalized Fermat curve of the type $(k,n)=(2,4)$, and  $p$ be in $F$. As $H^{0}(C,\omega_C)\cong H^{0}(C,\mathcal{O}_C(1))={\mathbb{C}}[x_0,x_1,x_2,x_3,x_4]_1$  (forms of degree $1$ of the polynomial ring  ${\mathbb{C}}[x_0,x_1,x_2,x_3]$),
It follows that the map $\xymatrix{ C \ar[r]^{f_c}&{\mathbb{P}}^{4}}$ is the canonical embedding.
Using the normal form of $f_{c}$ in $p$, we obtain that the gap values of $p$ are
 $a_1=1$, $a_2=2$, $a_3=3$, $a_4=5$, $a_5=7$. In this way,
 $\hat{w}(p)=w(p)=3$. Observe that, by virtue of Theorem \ref{th:hyperosc}, in this case the Weierstrass points are exactly the points of hyperosculation $F$.  
 \end{example}

\begin{example}\label{ej:bound}
Consider the following generalized Fermat curve of the type $(5,3)$  
\begin{center}
$C^5(-1):=\left \{ \begin{array}{rcc}
x_0^5+x_1^5+x_2^5&=&0\\
-x_0^5+x_1^5+x_3^5&=&0
\end{array}\right .$
\end{center}
and $p=[1:1:-\sqrt[5]{2}:0]$. 
In this case, $\hat{w}(p)<w(p)$.  In Section \ref{Sec:finalejemplo} we provide a proof of this fact.  
\end{example}

\subsection{Proof of Theorem  \ref{th:w-peso}}
Because of Example \ref{ej:cla-fer}, we only need to consider $n\geq 3$.  
 Let us consider the generalized Fermat curve $\C{k}{n-2}{\lambda}$, and $p\in F$.  Without loss of generality, 
 we can suppose that 
$p:=[1:\rho_1:\rho_2:\cdots :\rho_{n-1}:0]$, where $\rho_i^k=-\lambda_{n-2}-\lambda_{i-2}$,  $\lambda_{-1}=0$, and  $\lambda_{0}=1$
(it suffices to use the linear substitutions in the system of equations $\C{k}{n-2}{\lambda}$).
Also, let us recall the commutative diagram \eqref{diagrama2}.

The set $D(\lambda_1,...,\lambda_{n-2})=\pi({\mathrm{Fix}}(\varphi_n))\subset \C{k}{n-3}{\lambda}$, is the set of branch values of the regular branched covering map $\pi:\C{k}{n-2}{\lambda} \to \C{k}{n-3}{\lambda}$ whose deck group is $\langle \phi_{n} \rangle \cong {\mathbb Z}_{k}$. Define the sets
$$\mho_j:=\{(\lambda_1,...\lambda_{n-2})\in\mathcal{M}_{0, n+1} \mid D(\lambda_1,...,\lambda_{n-2})
 \cap \HC{j}{k}{n-3}{\lambda}=\emptyset \},$$
$$\mho:=\cap_{j=0}^{k-1}\mho_{j},$$
where $\HC{j}{k}{n-3}{\lambda}$ is the set of hyperosculating points of the map 
$$g_j:\C{k}{n-3}{\lambda}\hookrightarrow {\mathbb{P}}(Q(r-j)).$$

\begin{lemma}\label{le:open-zariski}
For each $j \in \{0,\ldots,k-1\}$, the set  $\mho_j$ is a  dense open set of  $\mathcal{M}_{0,n}$; in particular, $\mho$ is also a non-empty dense open set.
\end{lemma}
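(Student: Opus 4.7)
The plan is to prove openness and density of $\mho_j$ separately, exploiting the key structural feature that the target curve $\C{k}{n-3}{\lambda}$ of the projection $\pi$, together with the morphism $g_j$ and its hyperosculating locus $\HC{j}{k}{n-3}{\lambda}$, depends only on $\lambda_1,\ldots,\lambda_{n-3}$, whereas the branch locus $D(\lambda_1,\ldots,\lambda_{n-2})$ depends in addition on $\lambda_{n-2}$. The parameter $\lambda_{n-2}$ enters only through the last defining equation of $\C{k}{n-2}{\lambda}$, which is discarded under $\pi$; consequently $g_j$ and $\HC{j}{k}{n-3}{\lambda}$ are invariant under changes of $\lambda_{n-2}$. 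Moreover, the hyperosculating locus of any smooth projective curve in projective space is finite, a standard consequence of the Pl\"ucker formalism of Section~\ref{sec:hosc}, so $\HC{j}{k}{n-3}{\lambda}$ is a finite subset of $\C{k}{n-3}{\lambda}$.

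Next I would describe $D(\lambda)$ explicitly. Setting $x_n=0$ in the defining system of $\C{k}{n-2}{\lambda}$ shows that $\mathrm{Fix}(\varphi_n)$ consists of the points $[x_0:\cdots:x_{n-1}:0]$ with $[x_0:\cdots:x_{n-1}]\in \C{k}{n-3}{\lambda}$ satisfying the extra equation $\lambda_{n-2}x_0^k+x_1^k=0$. Since $x_0=x_1=0$ would force, via the first equation $x_0^k+x_1^k+x_2^k=0$ and the subsequent ones, all remaining coordinates to vanish, which is impossible in projective space, every point of $D(\lambda)$ has $x_0\ne 0$. Hence for each point $q=[q_0:\cdots:q_{n-1}]\in \C{k}{n-3}{\lambda}$, the condition $q\in D(\lambda)$ determines $\lambda_{n-2}$ uniquely as $-q_1^k/q_0^k$; that is, each point of the target curve lies in $D(\lambda)$ for at most one value of $\lambda_{n-2}$.

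With these two facts in hand, both assertions follow cleanly. For openness, $D(\lambda)$ and $\HC{j}{k}{n-3}{\lambda}$ are finite subsets of $\C{k}{n-3}{\lambda}$ depending algebraically on the parameters, so the condition $D(\lambda)\cap \HC{j}{k}{n-3}{\lambda}\ne \emptyset$ cuts out a Zariski closed subset of $\mathcal{P}_n$. For density, fix any admissible $(\lambda_1,\ldots,\lambda_{n-3})$, making $\HC{j}{k}{n-3}{\lambda}$ a fixed finite set; by the observation above, only finitely many values of $\lambda_{n-2}$ can place a point of $D(\lambda)$ into this set, and hence the admissible $\lambda_{n-2}\in \mathbb{C}\setminus\{0,1,\lambda_1,\ldots,\lambda_{n-3}\}$ avoiding these bad values form a cofinite Zariski open subset. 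This yields dense-openness in $\mathcal{P}_n$, and descends to $\mathcal{F}(k,n)\cong \mathcal{P}_n/\mathfrak{S}_{n+1}$ via the open projection $\Pi$.

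The step I expect to need the most care is the algebraic-family claim driving openness: as $(\lambda_1,\ldots,\lambda_{n-2})$ varies in $\mathcal{P}_n$, the sets $D(\lambda)$ and $\HC{j}{k}{n-3}{\lambda}$ vary in algebraic families inside the varying curve, so that their meeting is a Zariski closed condition on the base. Granting this standard but mildly technical family-theoretic fact, the remainder is a clean one-parameter genericity argument powered entirely by the freedom in $\lambda_{n-2}$ that leaves $\HC{j}{k}{n-3}{\lambda}$ invariant.
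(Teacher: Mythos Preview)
Your approach is essentially the paper's: lift to $\mathcal{P}_n$, show non-emptiness/density by fixing $(\lambda_1,\ldots,\lambda_{n-3})$ and varying $\lambda_{n-2}$ (so that $\HC{j}{k}{n-3}{\lambda}$ stays fixed while $D(\lambda)$ moves), then argue openness from analytic/algebraic dependence on parameters, and finally descend via the open map $\Pi$. Your write-up is in fact sharper on two points: you make explicit that $g_j$ and $\HC{j}{k}{n-3}{\lambda}$ are independent of $\lambda_{n-2}$ (the paper uses this implicitly), and your observation that each point $q\in \C{k}{n-3}{\lambda}$ lies in $D(\lambda)$ for at most one value of $\lambda_{n-2}$ is a cleaner way to bound the bad set than the paper's cardinality comparison. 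The one place where the paper is more concrete is precisely the step you flag: for openness, instead of an abstract family argument, the paper writes a local parametrization $g_j(\zeta)=[1:h'_1(\zeta):\cdots:h'_{t_j}(\zeta)]$ around a moving branch image $q$, with Taylor coefficients analytic in $(\lambda_1,\ldots,\lambda_{n-2})$, and takes the determinant $r(\lambda)=\det(T1,Th'_1,\ldots,Th'_{t_j})$ of the first $t_j+1$ Taylor coefficients; then $q$ is a hyperosculating point of $g_j$ iff $r(\lambda)=0$, so the good locus is the non-vanishing set of an analytic function. Plugging this determinantal criterion into your outline completes the argument without appeal to a general family-theoretic principle.
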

\begin{proof}
Let us consider the set 
$$\mho'_j:=\{(\lambda_1,...\lambda_{n-2})\in\mathcal{P}_n \mid D(\lambda_1,...,
\lambda_{n-2})\cap \HC{j}{k}{n-3}{\lambda}=\emptyset \}.$$  

As  $\Pi: \mho'_j\rightarrow \mho_j$ is an open  surjective map 
(see section  \ref{sec:pre-mod-esp}), it suffices to prove that $\mho'_j$ is an open set in the domain ${\mathcal P}_{n} \subset {\mathbb C}^{n-2}$.

In the following, when we use the notation $\hat{\lambda}_i$ we suppose that the value of $\lambda_i$ is fixed.

Let us first verify that $\mho'_j$ is non-empty.  Fix a point $(\hat{\lambda}_1,...,\hat{\lambda}_{n-3}) 
\in {\mathcal P}_{n-1}$ and consider the slice in ${\mathcal P}_{n}$ given by the points of the form
$(\hat{\lambda}_1,...,\hat{\lambda}_{n-3}, \lambda_{n-2}) \in {\mathcal P}_{n}$. We proceed to prove
that in such a slice only finitely many points cannot belong to $\mho_{j}$. For this fact, 
we only need to observe that the set
$$\{q:=\pi(p)\in \C{k}{n-3}{\hat{\lambda}}\mid \lambda_{n-2}\in {\mathbb{C}}-\{0,1, \hat{\lambda}_1,...,\hat{\lambda}_{n-3} \},\; p\in {\mathrm{Fix}}{(\varphi_n)}\}$$
 has infinitely many points and $\HC{j}{k}{n-3}{\hat{\lambda}}$ is a finite set.

Now, we proceed to check that $\mho'_{j}$ is open.  
Let us fix $(\hat{\lambda}_0,...,\hat{\lambda}_{n-2}) \in \mho'_{j}$. 
Let $q'\in D(\hat{\lambda}_0,...,\hat{\lambda}_{n-3})$ be such that $q'\not\in \HC{j}{k}{n-3}{\hat{\lambda}}$ and 
$p'\in {\mathrm{Fix}}{(\varphi_n)}$ such that $\pi(p')=q'$. 

For each $(\lambda_{1},\ldots,\lambda_{n-2}) \in {\mathcal P}_{n}$, let us consider a point 
$p\in {\mathrm{Fix}}{(\varphi_{n})}\subset \C{k}{n-2}{\lambda}$ and set $q=\pi(p)$. 
Observe that there are no technical problems in supposing that $(p,q)=(p',q')$ 
when  $(\lambda_{1},\cdots, \lambda_{n-2})=(\hat{\lambda}_{1},\cdots, \hat{\lambda}_{n-2})$. 
Recall that we may assume $p=[1:\rho_1:\cdots :\rho_{n-1}:0]$, where $\rho_i^k=-\lambda_{n-2}-\lambda_{i-2}$,  $\lambda_{-1}=0$ and $\lambda_{0}=1$.

If $z$ is a local chart around $p$, $z(p)=0$, then there exists a neighborhood $\Omega_p  \subset {\mathbb{C}}$ of $0$,  such that the map $f_0:\Omega_p \rightarrow \C{k}{n-2}{\lambda}\subset {\mathbb{P}}^{n}$, defined naturally by the embedding $\C{k}{n-2}{\lambda}\subset {\mathbb{P}}^{n}$, has the form
$$ \displaystyle f_0(z)=[1:h_1(z^{k}):h_2(z^{k}): \cdots: h_{n-1}(z^{k}):z],$$ 
where $h_i(0)=\rho_{i}$.

By the construction, there exists a local chart $\zeta$ of $\C{k}{n-3}{\lambda}$ around the point $q:=\pi(p)$, and a local parametrization 
$$\tilde{f}_0:\Omega'_q\subset {\mathbb{C}}\rightarrow \C{k}{n-3}{\lambda}\subset {\mathbb{P}}^{n-1}:\zeta\mapsto \tilde{f}_0(\zeta)=[1:h_1(\zeta):h_2(\zeta): \cdots: h_{n-1}(\zeta)].$$

Recall that the morphism $\pi$ is defined in local charts as $\zeta=\pi(z)=z^k$.  Additionally, 
if we consider a Taylor expansion of the $h_i(\zeta)$  we can observe that the coefficients are analytic  functions in the variables 
$\lambda_1,...,\lambda_{n-2}$.

Now let us consider the map $g_{j}:\C{k}{n-3}{\lambda}\rightarrow {\mathbb{P}}(Q(r-j))$. We note that, using the  local parametrization  of $\tilde{f}_0(\zeta)$ around the point $q$, a local parametrization 
 of   $g_{j}$ around the point $q$  can be found. Let  $g_{j}(\zeta):=[1:h'_1(\zeta):\cdots:h'_{t_j}(\zeta)]$  be  this 
 local parametrization of $g_{j}$,  where $t_j:=s(r-j)-1= 
 \dim_{{\mathbb{C}}}Q(r-j)-1$.
By construction the coefficients of $h'_i(\zeta)$ are analytic functions in the variables
$\lambda_1,...,\lambda_{n-2}$.

Given a formal series $l(\zeta)$, let $Tl$ be the vector column formed by all the coefficients of the formal series $l(\zeta)$
until the grade $t_j$.  We consider the following analytic function defined over $\mathcal{P}_n$ 

$$r(\lambda_1,...,\lambda_{n-2})=\det(T1,Th'_1,...,Th'_{t_j}).$$
	
Fixing the values  $\lambda_1,...,\lambda_{n-2}$, the point  $q\in \C{k}{n-3}{\lambda}$  is not a point of $\HC{j}{k}{n-3}{\lambda}$ if and only 
if $r(\lambda_1,...,\lambda_{n-2})$ is not zero.  As $q'$ is not a point of $\HC{j}{k}{n-3}{\hat{\lambda}}$,
we have that $r(\hat{\lambda}_1,...,\hat{\lambda}_{n-2})\neq 0$ (in particular, $r$ is not identically zero). 
The set $\mho'_j\subset \mathcal{P}_n$,  where the analytic function $r$ does not vanish, is the sought after open set. 
\end{proof}

Now, let us recall the map:
$$\xymatrix{ \C{k}{n-3}{\lambda} \ar[r] \ar@/^1pc/[rr]^{g_j}&{\mathbb{P}}^{n-1}\ar@{-->}[r]&{\mathbb{P}}(Q(r-j))}, \quad 0\leq j \leq r-1,$$
where $r=(n-1)(k-1)-2$, $Q(r-j)$  is the ${\mathbb{C}}$-vector space of the proposition \ref{pr:dim-s}.
Considering an automorphism of  $ {\mathbb{P}}(Q(r-j))$,  we can obtain the normal form of $g_j$ in $\pi(p)$:
 $$g_{j}(\zeta)=[1:g_{(1\; j)}(\zeta):g_{(2\;j)}(v):\cdots :g_{(t_j\;j)}(\zeta)],$$  
 where $t_j:=s(r-j)-1= \dim_{{\mathbb{C}}}Q(r-j)-1$,  and  the following inequalities are satisfied for each $0\leq j\leq r-1$:

$$(\star) \left \{\begin{array}{lll}
i\leq l_i:= {\mathrm{Ord}} g_{(i\; j)}(v),& \mbox{for all}& 1\leq i\leq t_j,\\
{\mathrm{Ord}} g_{(i\; j)}(v)< {\mathrm{Ord}} g_{(i+1\; j)}(v),& \mbox{for all} & i\geq 1.
\end{array} \right.$$

\begin{remark} \label{re:li=i}
For fixed $j \in \{0, \ldots, r-1\}$ the equality $l_i=i$, $1\leq i\leq t_j$, is valid when $\pi(p)$ is not a
 hyperosculation point of the  morphism  $\C{k}{n-3}{\lambda}\rightarrow {\mathbb{P}}(Q(r-j))$.  
\end{remark}

The idea of the rest of the proof is to construct the normal form of the canonical
embedding $f_c:\C{k}{n-2}{\lambda}\rightarrow {\mathbb{P}}^{g-1}$ at the point $p$ using the functions $g_{(i\; j)}$. We divide the proof into two cases:  
\begin{enumerate}
\item[] {\bf Case 1:} $(n-1)(k-1)-2<k$.
\item[] {\bf Case 2:} $(n-1)(k-1)-2\geq k$.
\end{enumerate}

\smallskip
 \noindent
 {\bf Case 1:}
 Let us suppose that $(n-1)(k-1)-2<k$; that is $(k,n) \in \{(2,4),(3,3)\}$.
 In example \ref{ej:(2,4)} we have analyzed the case $(k,n)=(2,4)$. 
  Let us consider a generalized Fermat curve $C^3(\lambda)$ of type $(3,3)$ and the morphism $\pi:C^3(\lambda)\rightarrow C^3$ , previously defined.  
 Observe that  $H^{0}(C^3(\lambda):\omega)\cong H^{0}(C^3(\lambda):\mathcal{O}(2))={\mathbb{C}}[x_0,x_1,x_2,x_3]_2$ 
 (forms of degree $2$ of the polynomial ring  ${\mathbb{C}}[x_0,x_1,x_2,x_3]$).
 Then, the canonical embedding $f_c$ is given by the following composition:
 \begin{center}
 $\xymatrix{ C^3(\lambda) \ar[r]_{f}\ar@/^1pc/[rr]^{f_c} &{\mathbb{P}}^{3}\ar[r]_{\nu_{2}}&{\mathbb{P}}^9} $.
 \end{center}
Now let us consider the following morphism:  
\begin{center}
   $\xymatrix{ C^{3} \ar[r] \ar@/^1pc/[rr]^{g
   :=\nu_{2}\circ \tilde{f}_0}&{\mathbb{P}}^{2}\ar[r]&{\mathbb{P}}(Q(2))\cong {\mathbb{P}}^5}$, 
 \end{center}
 where $Q(2)$ is the ${\mathbb{C}}$-vector space of Proposition \ref{pr:dim-s}, 
 $\tilde{f}_0$ the natural embedding of $C^{3}$.  
	Let us consider the normal form of $\tilde{f}_0$, and of $g$:
 \begin{center}
   $\tilde{f}_{0}(\zeta)=[1:\tilde{f}_{(0\;1)}(\zeta):\tilde{f}_{(0\;2)}(\zeta)]$,  $g(\zeta)=[1:g_{1}(\zeta):g_{2}(\zeta):\cdots :g_{5}(\zeta)]$, 
 \end{center}

When $\pi(p)$ is not a hyperosculating point of  $\tilde{f}_0:C^3\rightarrow {\mathbb{P}}^2$, we obtain that  ${\mathrm{Ord}} \tilde{f}_{(0\;i)}(\zeta)=i$, $i\leq 2$. 
 We also obtain that ${\mathrm{Ord}} g_{i}(\zeta)=i$, $i\leq 4$ and  ${\mathrm{Ord}} g_{5}(\zeta)\geq 5$.  
 Observe that ${\mathrm{Ord}} g_{5}(\zeta)=5$ if  and only if $\pi(p)$ is not a hyperosculating point of the embedding $g:C^3\rightarrow {\mathbb{P}}^5$. As in this case, $\zeta:=\pi(z):=z^3$, if we define the functions 
\begin{center}
$h_{i}(z):=\left \{ \begin{array}{ccc} 
g_{i}(z^3), & \mbox{if} & i=1,\\
z\tilde{f}_{(0\;1)}(z^3), & \mbox{if} & i=2,\\
g_2(z^3), & \mbox{if} & i=3,\\
\tilde{f}_{(0\;1)}(z^3)\tilde{f}_{(0\;2)}(z^3), & \mbox{if} & i=4,\\
g_{i-2}(z^3), & \mbox{if} & 5\leq i\leq 7,
\end{array} \right.$
 \end{center}
 then the normal form of the canonical embedding $f_c$ is given as 
 $$f_{c}(z)=[1:z:z^2:h_1(z):\cdots  :h_{7}(z)].$$

 Additionally, we obtain 
 $$a_i=i, \; 1\leq i\leq 5, \; a_6=7,\; a_7=8,\; a_8=10,\; a_9 = 13, \; a_{10} \geq 16,$$
so $\hat{w}(p)=14\leq w(p)$. The equality is fulfilled when $\pi(p)$ is not a hyperosculating point of the embedding $g:C^3\rightarrow {\mathbb{P}}^5$.  The proof, for this situation, now follows from Lemma \ref{le:open-zariski}.

\smallskip
\noindent
{\bf Case 2:}
 In the rest of we assume that $(n-1)(k-1)-2\geq k$, and we define the functions
$$h_{(i\;j)}(z):=z^jg_{(i\;j)}(z^k),\; 0\leq j\leq k-1,\; 1\leq i \leq t_{j}.$$

 Considering an automorphism of ${\mathbb{P}}^{g-1}$, where $g$ is the genus of $S$, we can 
 suppose that the canonical embedding $f$ around the point $p$ is
$$\tiny f(z):= [1:\cdots:z^{k-1}: h_{(1\;0)}(z):\cdots:h_{(1\;k-1)}(z):h_{(2\;0)}(z):\cdots:
h_{(r\;k-2)}(z):$$$$:h_{(r+1\;0)}(z): \cdots:h_{(t_0\;0)}(z)],$$
where $r=t_{k-1}+1$.

Let $a_i$, $1\leq i\leq$ be  the gap values of  $p$. Since $ki+j\leq kl_i+j={\mathrm{Ord}} h_{(i\;j)}(z)$, 
$1\leq j \leq k-1$ and $1\leq i\leq t_j$, the following inequality is obtained
$$\sum_{j=0}^{k-1}\sum_{i=0}^{t_{j}}(ki+j+1)\leq \sum_{i=1}^{g}a_{i}.$$

By Proposition \ref{pr:dim-s}, we have
$$\sum_{j=0}^{k-1}\sum_{i=0}^{t_{j}}(ki+j+1)-\frac{g(g+1)}{2}=\frac{1}{24}
(k-1) (k^{n-1}-2) (k^{n} + k^{n-1}-12),$$
from which we obtain the inequality part of the theorem, as the weight of the point $p$ is
$$w(p):=\sum_{i=1}^{g}a_i-\frac{g(g+1)}{2}.$$

\begin{remark}
\label{re:weie-weight}
Observe that, for each $0\leq j\leq k-1$,  $l_i=i$, for each $1\leq i\leq t_j$, if and only if  $w(p)=\hat{w}(p)$.  Combining this with 
Remark \ref{re:li=i} we obtain that $w(p)=\hat{w}(p)$ if and only if  $\pi(p)$ is not a hyperosculating point of the morphism $\C{k}{n-3}{\lambda}\rightarrow {\mathbb{P}}(Q(r-j))$
for all  $0\leq j\leq k-1$.
\end{remark}

As the  set $\mho$ of the Lemma \ref{le:open-zariski} satisfies the condition of the previous remark, this finishes the proof of the last part of the theorem.

\subsection{On the Example \ref{ej:bound}}\label{Sec:finalejemplo}
Now we will verify that the bound of Theorem \ref{th:w-peso} is not achieved in Example \ref{ej:bound}.
Consider the morphism 
$\pi: C^5(-1)\rightarrow C^5\; :[x_0:x_1:x_2:x_3]\rightarrow [x_0:x_1:x_2].$
As seen  in the introduction $[1:1:-\sqrt[5]{2}]$ is a  Weierstrass point of $C^5$, and in particular is a hyperosculating point of $C^5\rightarrow {\mathbb{P}}(Q(6))$, 
(this morphism is the canonical embedding).  By virtue of Remark \ref{re:weie-weight} we obtain that $\hat{w}(p)<w(p)$.

\section*{Acknowledgments} 
The authors are very grateful to the referee whose valuable suggestions and comments helped to improve the content and clarity in the presentation of this paper.


\end{document}